\newcommand{\Sp}{\mathrm{Sp}}
\renewcommand{\phi}{\varphi}
\begin{document}

\title{The irreducible control property in matrix groups}
\author{Jan Draisma}

\begin{abstract}
This paper concerns matrix decompositions in which the factors are
restricted to lie in a closed subvariety of a matrix group. Such
decompositions are of relevance in control theory: given a target
matrix in the group, can it be decomposed as a product of elements in
the subvarieties, in a given order? And if so, what can be said about
the solution set to this problem? Can an irreducible curve of target
matrices be lifted to an irreducible curve of factorisations? We show
that under certain conditions, for a sufficiently long and complicated
such sequence, the solution set is always irreducible, and we show that
every connected matrix group has a sequence of one-parameter subgroups
that satisfies these conditions, where the sequence has length less than
1.5 times the dimension of the group.
\end{abstract}

\maketitle

\section{Two motivating examples} \label{sec:Motivating}

\subsection{$\SL_2(\CC)$ generated by two groups of shear
mappings} \label{ssec:SL2}

It is well known that every matrix $g$ in the group $\SL_2(\CC)$ of
complex $2 \times 2$-matrices with determinant $1$ can be written as
a product of matrices of the following forms: 
\[ x_1(a)=\begin{bmatrix} 1 & a \\ 0 & 1 \end{bmatrix}
\text{ and }
x_2(a)=\begin{bmatrix} 1 & 0 \\ a & 1 \end{bmatrix} \text{
for } a \in \CC, 
\]
which represent shears along the $x$-axis and along the $y$-axis,
respectively. Let $X_i:=\{x_i(a) \mid a \in \CC\}$, an additive
one-parameter subgroup of $\SL_2(\CC)$.

This paper concerns the variety of all factorisations of a target
matrix $g$ as a product of matrices in the $X_i$, in a prescribed order,
where repetitions are allowed. Since $\dim \SL_2(\CC)=3$, we need at least
three factors to reach all elements of $\SL_2(\CC)$. For $a,b,c \in \CC$ we
compute 
\begin{equation} 
\label{eq:Parm}
x_1(a) x_2(b) x_1(c)=
\begin{bmatrix}
1 + a b & a + c + a b c \\ 
b & 1 + b c 
\end{bmatrix}
=:
\begin{bmatrix}
x & y \\ z & w 
\end{bmatrix}
\end{equation}
We can recover $a,b,c$ as rational functions in $x,y,z,u$:
\[ b=z, \quad c=(w-1)/z, \text{ and } a=(x-1)/z. \]
This implies, first, that the image of the multiplication map $\mu_{121}:
X_1 \times X_2 \times X_1 \to \SL_2(\CC)$ is three-dimensional, hence
dense in $\SL_2(\CC)$; and second, that the pre-image of any
matrix $g \in \SL_2(\CC)$
with $g_{21} \neq 0$ has precisely one pre-image. Matrices with $g_{21}=0$
and $g_{11},g_{22}$ not both $1$ are {\em not} in the image of the
multiplication map. Summarising, the multiplication map is dominant
(has dense image) and birational (has generic fibres of cardinality
$1$); but it is not surjective.

This can be remedied by adding another factor: the multiplication map
$\mu_{1212}:X_1 \times X_2 \times X_1 \times X_2 \to
\SL_2(\CC)$ is surjective,
since an arbitrary element of $\SL_2(\CC)$ can be right-multiplied by some
element $x_2(-a)$ from $X_2^{-1}=X_2$---which corresponds to subtracting $a$
times the second column from the first---to make sure that the entry
at position $(2,1)$ becomes nonzero, so that the product is in $\im \mu_{121}$. 

However, the map $\mu_{1212}$ has another undesirable feature: certain fibres are
not irreducible. For instance, the solution set to the system of equations
\[ 
x_1(a) x_2(b) x_1(c) x_2(d)=
\begin{bmatrix}
1 + a b + a d + c d + a b c d & a + c + a b c \\
b + d + b c d & 1 + b c
\end{bmatrix}
= I
\] 
is the union of the two lines in $\CC^4$ with equations $b=d=a+c=0$ and
$c=a=b+d=0$. When designing a system where the control parameters
$a,b,c,d$ should vary with the target matrix $g$, it is desirable that
pre-images of irreducible varieties are irreducible themselves.

As we will see later in \S\ref{ex:SL2b}, the multiplication map
$\mu_{12121}$ still has his undesirable behaviour, but the multiplication
map $\mu_{121212}$ and those for longer words do not: for those, the
pre-image of any irreducible variety is irreducible. Note that the order
of the factors is important here; e.g., since $X_1 \cdot X_1 =X_1$,
the image of $\mu_{111222}$ is the same as that of $\mu_{12}$, and only
two-dimensional.

Our goal is to show that this behaviour is quite typical for collections of
subvarieties $(X_a)_{a \in A}$ of a matrix group $G$: under suitable
conditions, for sufficiently long and sufficiently complicated
words $w$ over the alphabet $A$, the corresponding multiplication map
$\mu_w$ has the property that the pre-image of any irreducible variety
is irreducible. It follows, for instance, that any irreducible curve
worth of matrices $g$ can be lifted to an irreducible curve worth of
factorisations.

\subsection{The ULU-decomposition} \label{ssec:ULU}

Let $L,U \subseteq \GL_n(\CC)$ be the groups of invertible
lower-triangular matrices and of upper-triangular matrices with $1$'s
on the diagonal, respectively. By the classical LU-decomposition,
the multiplication map
\[ L \times U \to \GL_n(\CC) \]
is an isomorphism of varieties with the open subset of $\GL_n(\CC)$ where
all leading principal subdeterminants are nonzero. To reach {\em all}
invertible matrices, one usually adds a factor from the finite group of
permutations matrices. Here, instead, we add another factor $U$, and
will prove the following fact. 

\begin{prop} \label{prop:ULU}
The multiplication map 
\[ \mu: U \times L \times U \to \GL_n(\CC) \]
is surjective, and, moreover, the preimage $\mu^{-1}(X)$ of every 
irreducible variety $X \subseteq GL_n(\CC)$ is irreducible.
\end{prop}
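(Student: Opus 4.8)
The plan is to reduce the statement to a computation of the fibres of $\mu$, and then to invoke a general principle: a flat, surjective morphism with irreducible fibres over an irreducible base has the property that the preimage of every irreducible subvariety is irreducible. Concretely, I would first show that every fibre $\mu^{-1}(g)$ is irreducible of dimension exactly $\binom{n}{2}=\dim U=\dim(U\times L\times U)-\dim\GL_n(\CC)$; then miracle flatness (source and target are smooth and the fibres are equidimensional) forces $\mu$ to be flat, hence open.

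The heart of the argument is the identification of the fibres. I would describe $\mu^{-1}(g)$ by projecting onto the last factor: a triple $(u_1,\ell,u_2)$ with $u_1\ell u_2=g$ is, for fixed $u_2$, the same as a factorisation $u_1\ell=gu_2^{-1}$ with $u_1\in U$ and $\ell\in L$. Since $U\cap L=\{I\}$, the classical LU-decomposition says that the multiplication $U\times L\to\GL_n(\CC)$ is injective with open image $UL$ (the ``big cell''), so such $(u_1,\ell)$ exists and is unique precisely when $gu_2^{-1}\in UL$. Hence projection to $u_2$ identifies $\mu^{-1}(g)$ with $W_g:=\{u_2\in U : gu_2^{-1}\in UL\}$, and the inverse map $u_2\mapsto(u_1,\ell,u_2)$ is regular because the recovered factors $u_1,\ell$ are regular functions of a point of the big cell. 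As $UL$ is open in $\GL_n(\CC)$, the set $W_g$ is open in $U\cong\CC^{\binom{n}{2}}$; being a nonempty open subset of an irreducible variety, it is irreducible of dimension $\binom{n}{2}$, as required.

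This leaves the nonemptiness of $W_g$, which is exactly the surjectivity of $\mu$, and which I would prove geometrically in the flag variety $\GL_n(\CC)/L$. The condition $gu_2^{-1}\in UL$ says that the coset $gu_2^{-1}L$ lies in the big cell $\Omega=UL/L$, which is open and dense because the flag variety is irreducible. As $u_2$ ranges over $U$, the cosets $u_2^{-1}L$ sweep out exactly $\Omega$, so $\{gu_2^{-1}L : u_2\in U\}=g\Omega$. Now $g\Omega$ is the image of the open dense $\Omega$ under the automorphism $x\mapsto gx$ of the flag variety, hence again open and dense; two open dense subsets of an irreducible space meet, so $g\Omega\cap\Omega\neq\emptyset$, giving a valid $u_2$ and hence both surjectivity and $W_g\neq\emptyset$.

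Finally I would assemble the pieces. With $\mu$ flat and surjective and all fibres irreducible, base change along an irreducible $X\subseteq\GL_n(\CC)$ yields a flat, hence open, surjective morphism $\mu^{-1}(X)\to X$ whose fibres are the (irreducible) fibres of $\mu$. The conclusion then follows from the elementary lemma that an open surjection onto an irreducible base with irreducible fibres has irreducible total space: if there were distinct components $Y_1,\dots,Y_r$, the images of the open loci $Y_i\setminus\bigcup_{j\neq i}Y_j$ would be nonempty opens of $X$, hence would meet, and a fibre over a common point would be forced to lie in two components at once, contradicting its irreducibility. The main obstacle, and the step I expect to require the most care, is the fibre identification: checking that projection to $u_2$ really is an isomorphism of varieties onto the open set $W_g$ (regularity of the recovered $u_1$ and $\ell$), and handling the interplay between the scheme-theoretic fibre product and the reduced preimage so that topological irreducibility transfers.
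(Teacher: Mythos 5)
Your proof is correct, but it takes a genuinely different route from the paper's. The paper obtains Proposition~\ref{prop:ULU} in three lines from its general machinery: with $X_1=L$, $X_2=U$, the words $21$ and $12$ are open and birational by the UL- and LU-decompositions, so Proposition~\ref{prop:Irred} makes every word containing $2112$ irreducible, and Lemma~\ref{lm:Repetition} collapses the repeated letter to give the word $212$, i.e.\ $U\times L\times U$. You instead compute the fibres directly: projection to the last factor identifies $\mu^{-1}(g)$ with the open subset $W_g=\{u_2\in U:\ gu_2^{-1}\in UL\}$ of $U\cong\CC^{\binom{n}{2}}$ (the inverse being regular because $U\times L\to UL$ is an isomorphism onto an open subset), your flag-variety argument shows $W_g\neq\emptyset$ for every $g$ (which is exactly surjectivity), miracle flatness then makes $\mu$ flat and hence universally open, and the standard lemma that an open surjection with irreducible fibres over an irreducible base has irreducible total space finishes the proof after base change to $X$. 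In this special case your approach buys strictly more: flatness of $\mu$, an explicit description and equidimensionality of all fibres, and Zariski (indeed universal) openness, whereas the paper's Proposition~\ref{prop:Open} only yields Euclidean openness. What the paper's route buys is robustness: Proposition~\ref{prop:Irred} needs no equidimensionality or flatness, which is why it generalises to Theorems~\ref{thm:Main} and~\ref{thm:Any}, where the analogue of your miracle-flatness step is unavailable. The two points you flag as delicate are indeed the only ones requiring care, and both are standard: the recovered factors $u_1,\ell$ are regular on the big cell, and openness and irreducibility are purely topological notions, so the reduced preimage suffices.
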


Observe that the variety on the left-hand side has dimension 
\[ n^2 + \binom{n}{2} < 1.5 \cdot \dim \GL_n(\CC). \] 
This is not a coincidence; see Theorem~\ref{thm:Any}. 

\subsection*{Organisation}
The structure of this paper is as follows. In Section~\ref{sec:Intro}
we introduce the general setting and state our main results,
Theorem~\ref{thm:Main} and Theorem~\ref{thm:Any}. We also formulate a
useful application, Proposition~\ref{prop:Curve}, about lifting curves
of matrices to curves of factorisations. In Section~\ref{sec:Proofs}
we prove Theorem~\ref{thm:Main}, Proposition~\ref{prop:ULU},
and various intermediate results of independent interest. Finally, Section~\ref{sec:Examples}
contains the worked-out example of $\SL_2(\CC)$; an example
with symplectic groups discussed in \cite{Ivarsson20}; and 
a proof of Theorem~\ref{thm:Any}.

\section{Introduction and results} \label{sec:Intro}

Let $G$ be a complex algebraic group and let $(X_a)_{a \in A}$ be a collection
of irreducible subvarieties of $G$, each containing the unit element $1
\in G$. Without loss of generality \cite[Proposition I.1.10]{Borel91},
$G$ is a closed subgroup of some $\GL_n(\CC)$ defined by the vanishing
of some polynomial equations in the $n^2$ matrix entries. But we will
not need this concrete realisation of $G$ as a matrix group.

Denote by $A^*$ the set of finite sequences (words)
in the alphabet $A$. Each $w=w_1\ldots w_l \in A^*$ gives rise to a
multiplication map
\[ \mu_w: X_w:=X_{w_1} \times \cdots \times X_{w_l} \to G,
\quad 
(x_1,\ldots,x_l) \mapsto x_1 \cdots x_l. \]
Assume that for all $a \in A$ there exists some $b \in A$ such that
$(X_a)^{-1}=X_b$, and that the $(X_a)_{a \in G}$ together generate $G$
as a group. 

\begin{de}
A word $w$ is called {\em dominant/surjective/birational} if the
map $\mu_w$ has the corresponding property. The word $w$ is called
{\em irreducible} if of all irreducible, closed subsets $Y \subseteq G$
the pre-image $\mu_w^{-1}(Y)$ is irreducible in $X_w$.
\end{de}

In this definition, $\mu_w$ and $w$ are called {\em birational} if
for $g$ in an open dense subset of $G$ the pre-image in $X_w$ consists
of a single point. 

By \cite[Proposition I.2.2]{Borel91}, surjective words exist; in
particular, since the $X_a$ are irreducible, our assumptions imply that
$G$ is a {\em connected} algebraic group (for algebraic
groups, this is equivalent to being irreducible
\cite[Proposition I.1.2]{Borel91}). 

Throughout the text, except where stated otherwise, topological terms will
refer to the Zariski topology, where the closed sets in $G$ are defined
by regular functions (restrictions of polynomials in the concrete model
of $G$ as a closed subgroup of $\GL_n(\CC)$). However, the image of $\mu_w$ is
constructible by Chevalley's theorem \cite[Corollary AG11.10.2]{Borel91},
and therefore its closure in the Zariski topology is the same as its
closure in the Euclidean topology. In particular, $\mu_w$ is
dominant in the Zariski topology if and only if it is dominant in the
Euclidean topology.

There is one notable exception to the rule that topological terms refer
to the Zariski topology, which we state now:

\begin{de}
A word $w$ is called {\em open} if the map $\mu_w$ is open in the
{\em Euclidean} topology.
\end{de}

If $\mu_w$ is open in the Euclidean topology, and if $U$ is a
Zariski-open subset of $X_w$, then $\mu_w(U)$ is Zariski-constructible
and Euclidean-open, and therefore Zariski-open. Hence $\mu_w$ is also
open in the Zariski topology, and therefore, since $G$ is
irreducible in the Zariski topology, dominant. But in our proofs
we will really use the Euclidean notion of openness: the image of any
open ball is open.

We have the following implications for a word $w
\in A^*$ and its map $\mu_w$:
\begin{center}
\includegraphics{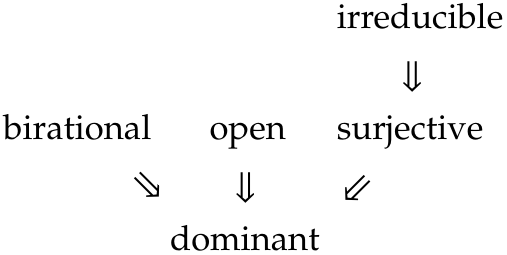}
\end{center}
For the implication irreducible $\Rightarrow$ surjective we observe that
the pre-image of any point under $\mu_w$ must be irreducible, hence in
particular non-empty.


\begin{ex} \label{ex:SL2a}
For the subgroups $X_1,X_2$ of $\SL_2(\CC)$ in
\S\ref{ssec:SL2}, the word $12$ is not dominant,
and the word
$121$ is dominant and birational. The map $\mu_{121}$ is locally open
around points $(x_1(a),x_2(b),x_1(c)) \in X_{121}$ where $b \neq 0$,
as there its derivative has full rank $3$; this is immediate
from \eqref{eq:Parm}. But it is not open around
the remaining points; e.g., no neigbourhood of $(x_1(0),x_2(0),x_1(0))$
is mapped onto a neighbourhood of $I$---indeed, no
open neighbourhood of $I$ is in the image of $\mu_{121}$. 
So $121$ is not open.  As we have
seen, the word $(1,2,1,2)$ is surjective but not irreducible. It {\em is}
open; see \S\ref{ex:SL2b}, where also longer words are discussed.
\end{ex}

We now introduce the key notion in this paper.

\begin{de}
We say that the collection $(X_a)_{a \in A}$ has the {\em irreducible
control property} if there exists a word $u$ such that each word
containing $u$ as a consecutive sub-word is irreducible.
\end{de}

For the one-parameter subgroups $X_1,X_2$ of $\SL_2(\CC)$ from
\S\ref{ssec:SL2}, the word $121212$ has the property required
of $u$ in the definition; see \S\ref{ex:SL2b}. So $(X_i)_{i=1,2}$
has the irreducible control property.

We use the term ``control'' because we can think, for each word $w \in
A^*$, of the arguments of $\mu_w:X_w \to G$ as parameters that we have
control over and that we want to tune so as to obtain a target element $g
\in G$.  The irreducible control property is clearly desirable: it tells
us that for words $w=w_1\ldots w_l$ containing $u$, the solution set
to the system of equations
\[ x_1 \cdots x_l \in Y \text{ on the control parameters } x_i \in X_{w_i},\ i=1,\ldots,l \]
is irreducible. Here is one possible application.

\begin{prop} \label{prop:Curve}
Let $w$ be an irreducible word, $g \neq g'$ elements of $G$, and $C$ an
irreducible curve in $G$ passing through $g$ and $g'$. Moreover,
let $x,x' \in X_w$ be such that $\mu_w(x)=g$ and $\mu_w(x')=g'$. Then
there exists an irreducible curve $D$ in $X_w$ passing through $x$
and $x'$ that via $\mu_w$ maps dominantly into $C$.
\end{prop}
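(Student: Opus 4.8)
The plan is to use the hypothesis that $w$ is irreducible to deduce irreducibility of the relevant fibered/pullback variety, and then extract the curve $D$ by intersecting with generic hyperplane sections (a Bertini-type argument). Let me sketch the key steps.

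First I would form the preimage $Z := \mu_w^{-1}(C) \subseteq X_w$. Since $C$ is an irreducible closed curve in $G$ and $w$ is irreducible, $Z$ is irreducible by definition of an irreducible word. Both $x$ and $x'$ lie in $Z$, because $\mu_w(x) = g \in C$ and $\mu_w(x') = g' \in C$. Moreover, the restriction $\mu_w|_Z : Z \to C$ is dominant: its image is a constructible subset of $C$ containing the two distinct points $g, g'$, so its closure is all of $C$ (as $C$ is an irreducible curve, the only closed subsets are finite sets and $C$ itself, and the image is infinite since it surjects modulo finite fibers onto an infinite set — more carefully, the image has dimension at least $1$ because $Z$ dominates $C$ once we check the generic fiber is proper). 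In any case, the image is dense in $C$.

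Next I would cut $Z$ down to a curve passing through both prescribed points. The standard tool is a Bertini-type irreducibility theorem for linear sections: if $Z$ is an irreducible quasi-projective variety of dimension $d$, then a generic linear subspace $\Lambda$ of codimension $d-1$ meets $Z$ in an irreducible curve. The subtlety is that we must force this curve to contain the two specific points $x, x'$. To handle this, I would work in the family of codimension-$(d-1)$ linear subspaces passing through both $x$ and $x'$ (in a suitable projective or affine embedding of $X_w$); this is still a large enough linear system that the generic member cuts out an irreducible curve $D \ni x, x'$, provided $x,x'$ are not too special. Set $D := \overline{Z \cap \Lambda}$ for generic such $\Lambda$. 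Finally, I would verify that $\mu_w|_D$ maps dominantly into $C$: since $\mu_w|_Z$ is dominant onto the curve $C$ and $D$ is a generic linear section of the irreducible $Z$, the composite $D \to C$ is non-constant, hence dominant onto the irreducible curve $C$.

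\emph{The hard part} will be ensuring the Bertini section can be chosen to pass through the two prescribed points $x$ and $x'$ while remaining irreducible. The classical Bertini irreducibility statement applies to \emph{generic} linear sections, but imposing two base points restricts the linear system, and near special points (e.g., singular points of $Z$, or points where $Z$ fails to be locally irreducible) the generic-member argument can break down. The cleanest fix is to invoke a version of Bertini with base points (as in the connectedness/irreducibility theorems of Jouanolou or Fulton--Hansen): a linear system that is base-point-free away from a small set and whose members through $x, x'$ still move enough will have an irreducible generic member through those two points, as long as $\dim Z \geq 2$ and $x, x'$ are generic enough. If $\dim Z = 1$ already, then $Z$ itself is the desired curve and no cutting is needed. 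I would also need to confirm that $\mu_w|_D$ is genuinely \emph{dominant} and not merely non-constant in a degenerate sense; this follows because $C$ is one-dimensional and irreducible, so any non-constant morphism into $C$ is automatically dominant.
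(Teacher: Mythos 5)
Your overall strategy coincides with the paper's: form $Z:=\mu_w^{-1}(C)$, note that irreducibility of $w$ makes $Z$ irreducible and that $x,x'\in Z$, produce an irreducible curve $D\subseteq Z$ through both points, and conclude dominance onto $C$ from the fact that $\mu_w(D)$ contains the two distinct points $g\neq g'$ of the irreducible curve $C$. The one place where you diverge is the middle step: the paper simply cites the classical fact that any two points of an irreducible variety lie on a common irreducible curve (Mumford, \emph{Abelian Varieties}, p.~56), whereas you attempt to prove it by a Bertini argument with assigned base points. As you yourself flag, that is where your sketch is incomplete: you hedge with ``provided $x,x'$ are not too special'' and ``as long as $x,x'$ are generic enough,'' but $x$ and $x'$ are \emph{prescribed} points of $Z$ (for instance they may be singular points of $Z$, or lie in a positive-dimensional fibre of $\mu_w$), so no genericity may be assumed. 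The lemma you need is true for arbitrary points of an irreducible quasi-projective variety over $\CC$, but establishing it in that generality requires more care than the generic-member form of Bertini (one typically passes to a normalization or uses a connectedness theorem, precisely the Jouanolou/Fulton--Hansen machinery you allude to without carrying it out). The gap is therefore one of justification rather than of strategy, and is repaired simply by citing the standard connecting-curve lemma; everything else in your argument, including the observation that a non-constant map to an irreducible curve is automatically dominant, is correct.
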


\begin{proof}
By irreducibility of $w$, $Z:=\mu_w^{-1}(C)$ is irreducible.  Any two
points in an irreducible variety are connected by some irreducible curve
(see, e.g., \cite[page 56]{Mumford85}), hence $x,x' \in Z$ are connected
by an irreducible curve $D \subseteq Z$. Since the image of $D$ contains
two points of $C$, $\mu_w$ maps $D$ dominantly into $C$.
\end{proof}

We can now state our main theorems. 

\begin{thm}\label{thm:Main}
Let $(X_a)_{a \in A}$ be a collection of irreducible subvarieties of an
algebraic group $G$, each containing the neutral element $1 \in G$.
Assume that $\bigcup_{a \in A} X_a$ generates $G$ as a group and that
for each $a \in A$ there exists some $b \in A$ with $X_a^{-1}=X_b$. Suppose
that for some word $u=u_1\ldots u_l \in A^*$ the multiplication map
\[ \mu_u:X_u:=X_{u_1} \times \cdots \times X_{u_l} \to G,  \quad 
(x_1,\ldots,x_l) \mapsto x_1 \cdots x_l
\]
is birational. Then the collection $(X_a)_{a \in A}$ satisfies the irreducible
control property. Specifically, there exists a natural number $k$ such
that for each word $w \in A^*$ containing $k$ consecutive copies of $u$
any irreducible $Y \subseteq G$ has an irreducible pre-image
$\mu_w^{-1}(Y) \subseteq X_w$.
\end{thm}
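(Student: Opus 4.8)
The plan is to reduce the theorem to a single dimension count on ``bad'' factorisations, after extracting irreducibility of a generic stratum from a soft topological lemma. Since $\mu_u$ is birational there is a dense open $\Omega \subseteq G$ and a dense open $V := \mu_u^{-1}(\Omega) \subseteq X_u$ with $\mu_u|_V : V \xrightarrow{\sim} \Omega$ an isomorphism; write $B := X_u \setminus V$, a closed set of dimension $\le \dim X_u - 1 = \dim G - 1$ (here $\dim X_u = \dim G$ because $\mu_u$ is dominant and generically finite). Fix a word $w = \alpha\, u^k\, \beta$ containing $k$ consecutive copies of $u$; I regard $X_w = X_\alpha \times X_u^k \times X_\beta$ and stratify the middle block by which of the $k$ factors land in $B$. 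The first tool I would record is elementary: if $f \colon Z \to Y$ is a surjective morphism, open in the Euclidean topology, with $Y$ irreducible and every fibre $f^{-1}(y)$ irreducible, then $Z$ is irreducible, and likewise $f^{-1}(W)$ is irreducible for every irreducible $W \subseteq Y$ (apply the first assertion to the open surjection $f^{-1}(W) \to W$, whose fibres are unchanged). Its proof is one line: two disjoint nonempty open sets of $Z$ would have images meeting in some $y$, and then $f^{-1}(y)$ would meet both, contradicting irreducibility of the fibre.

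Next I would handle the \emph{generic stratum}, where all $k$ middle factors lie in $V$. Over this open set $X_\alpha \times V^k \times X_\beta$ the map $\mu_w$ is, via the isomorphisms $V \cong \Omega$, identified with the restriction to $X_\alpha \times \Omega^k \times X_\beta$ of the iterated multiplication $X_\alpha \times G^k \times X_\beta \to G$. The latter is smooth and surjective with irreducible fibres (the $G^k$ block alone is a smooth surjection with fibres isomorphic to $G^{k-1}$, and the outer factors merely translate), hence open with irreducible fibres; by the tool lemma its preimage of an irreducible $Y$ is irreducible, and intersecting with the dense open $X_\alpha \times \Omega^k \times X_\beta$ preserves irreducibility and nonemptiness (using $\Omega \cdot \Omega = G$). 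Thus $\mu_w^{-1}(Y)$ meets $X_\alpha \times V^k \times X_\beta$ in an irreducible open set of dimension $\dim X_w - \dim G + \dim Y$.

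It then remains to show that, for $k$ large, no irreducible component of $\mu_w^{-1}(Y)$ is disjoint from this generic stratum. Every component has dimension at least $\dim X_w - \dim G + \dim Y$: this is the lower bound for preimages of a subvariety under a morphism to the smooth variety $G$ (reduction to the diagonal, which is a local complete intersection of codimension $\dim G$). A component avoiding the generic stratum lies, by irreducibility and pigeonhole, in a single stratum where a fixed set $T$ of middle factors lands in $B$. If $1 \le |T| \le k-1$, I would \emph{peel off one good middle factor}: fixing all other coordinates, that factor ranges over $\Omega$ subject to a constraint $c\, a\, c' \in Y$, i.e.\ over $\Omega$ intersected with a translate of $Y$, a set of dimension $\le \dim Y$; bounding the remaining free coordinates by $\dim B \le \dim G - 1$ per bad factor and $\dim G$ per good one yields stratum dimension $\le \dim X_w - \dim G + \dim Y - |T|$, strictly below the generic value. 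The exceptional case $|T| = k$ (\emph{all} middle factors bad) admits no such free translation, and here I would use only the trivial containment in $X_\alpha \times B^k \times X_\beta$, giving dimension $\le \dim X_w - k$; this is $< \dim X_w - \dim G + \dim Y$ precisely when $k > \dim G - \dim Y$, so $k := \dim G + 1$ suffices uniformly in $Y$. Combining the lower bound with these strict upper bounds forces every component to meet the generic stratum, where $\mu_w^{-1}(Y)$ is irreducible and open; hence there is a single component and $\mu_w^{-1}(Y)$ is irreducible.

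The main obstacle is exactly the all-bad stratum, and it is genuine rather than technical: it is the only place where the birational map $\mu_u$ can degenerate without any compensating free direction to restore openness, and it is what forces $k$ to grow with $\dim G$. A sanity check in $\SL_2(\CC)$ with $u = 121$ is instructive, since there the fibre of $\mu_{uu}$ over $I$ has an all-bad component (both factors with vanishing lower-left entry) of the same dimension $3 = \dim G$ as the generic stratum; so $k = 2$ already fails, in line with the threshold $k > \dim G$ above. The remaining care in the write-up is bookkeeping: checking the dimension accounting when the outer blocks $X_\alpha, X_\beta$ have factors of dimension smaller than $\dim G$, and confirming that the translation-peeling step is an open immersion so that the bound on the mixed strata is exact rather than merely generic.
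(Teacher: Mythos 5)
Your argument is correct, but it takes a genuinely different route from the paper's. The paper factors the problem through two reusable intermediate results: Proposition~\ref{prop:Open} shows that $u^{\dim G+1}$ is \emph{open} in the Euclidean topology, by bounding the dimension of the locus where $d\mu$ fails to be surjective (Lemma~\ref{lm:Jac}) against the lower bound on fibre dimension; Proposition~\ref{prop:Irred} then shows that any word $swut$ with $w$ open and $u$ birational is irreducible, by writing the part of the fibre where the $u$-coordinate lies in the good open set as the image of an irreducible set under a rational section, and using openness of $w$ to perturb an arbitrary point of the fibre into that part. This yields $k=\dim G+2$ copies of $u$. You instead run a single, purely algebraic dimension count on $X_\alpha\times X_u^k\times X_\beta$, stratified by which copies of $u$ land in the bad locus $B$: the all-good stratum of $\mu_w^{-1}(Y)$ is irreducible of the expected dimension $\dim X_w-\dim G+\dim Y$, every component has at least that dimension by the local-complete-intersection bound, the mixed strata lose at least $|T|\ge 1$ after peeling off one good factor, and the all-bad stratum has dimension at most $\dim X_w-k$, which $k=\dim G+1$ kills. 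Your route avoids the Euclidean topology entirely and gives a marginally better exponent; what it does not deliver is the modular statement ``open times birational implies irreducible'', which the paper reuses with \emph{short} open words coming from the LU and Bruhat decompositions to obtain the short irreducible words of Proposition~\ref{prop:ULU}, Theorem~\ref{thm:Sp2n} and Theorem~\ref{thm:Any}; your count inherently needs many copies of $u$ and so cannot replace Proposition~\ref{prop:Irred} in those applications. Two details to nail down in a full write-up: for $\mu_u^{-1}(\Omega)\to\Omega$ to be an isomorphism you must first shrink $\Omega$ so that it avoids the (lower-dimensional) image of $X_u\setminus V$; and in your topological lemma the images of disjoint Zariski-open sets under a Euclidean-open map must be promoted to Zariski-open via Chevalley constructibility (or, for the smooth multiplication map you actually apply it to, by citing Zariski-openness of smooth morphisms).
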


\begin{thm} \label{thm:Any}
Every connected algebraic group $G$ has a collection $(X_a)_{a \in A}$
of connected, one-dimensional subgroups with the irreducible control
property. Indeed, there exist $n:=\dim G$ such one-parameter subgroups
$X_1,\ldots,X_n$ and a word $u$ of length $< 1.5 \cdot \dim G$ ($=$ if $G$ is
the trivial group) such that each word containing $u$ as a consecutive
sub-word is irreducible.
\end{thm}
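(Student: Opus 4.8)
The plan is to prove Theorem~\ref{thm:Any} by reducing it to Theorem~\ref{thm:Main}: it suffices to exhibit, for each connected algebraic group $G$, a finite collection of one-parameter subgroups whose union generates $G$, which is closed under inverses, and for which \emph{some} word $u$ is birational; Theorem~\ref{thm:Main} then supplies the irreducible control property automatically, and the only remaining work is to control the length of the birational word. Note that one-parameter subgroups are automatically closed under inversion, since each $X_i$ is a one-dimensional connected subgroup and hence satisfies $X_i^{-1}=X_i$; so that hypothesis will be free.

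First I would reduce to a statement about the Lie algebra $\mathfrak{g}$ of $G$. Choosing one-parameter subgroups $X_i$ amounts to choosing lines $\mathfrak{l}_i \subseteq \mathfrak{g}$, with $X_i$ the image of $\mathfrak{l}_i$ under the exponential map (or, in the unipotent or semisimple root-subgroup cases, an explicit additive one-parameter subgroup). The multiplication map $\mu_u$ is birational precisely when it is dominant with generic fibre a single point; dominance can be checked infinitesimally by showing that the differential $d\mu_u$ at a generic point is surjective, equivalently that the tangent spaces of the factors, suitably translated, span $\mathfrak{g}$. So the core problem becomes: find a word $u = u_1 \ldots u_l$ over an alphabet of $n = \dim G$ lines such that the sum of the translated tangent lines spans $\mathfrak{g}$ at a generic point, \emph{and} such that the fibre is generically reduced of degree one, while keeping $l < 1.5\,n$. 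The motivating $\SL_2$ and $ULU$ examples show the target ratio $1.5$ is achievable: there, $n=\dim\GL_n$ one uses a pattern in which most generators appear once but a controlled fraction must be repeated to regain the directions lost to the group structure.

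The key combinatorial step is to produce a short birational word. I would argue as follows. By the Bruhat-type or triangular decompositions (generalising the $ULU$ picture), a connected group admits a big cell that is an open dense product of root-subgroup-like one-parameter factors; listing these factors gives a \emph{dominant} word of length equal to $\dim G$, but not necessarily surjective or birational on the nose. To upgrade dominance to birationality of a short word, I would choose the lines $\mathfrak{l}_i$ generically among a spanning set and order them so that the partial products sweep out a flag of subspaces with each new factor contributing exactly one new tangent direction at the identity; genericity then forces the generic fibre to be a single reduced point by a dimension count, since $\dim X_u = l$ and $\dim G = n$ cannot both be met with positive-dimensional fibres once $l$ is as small as possible. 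The factor of $1.5$ enters because, to also guarantee that $\bigcup_a X_a$ \emph{generates} $G$ and that the word is birational rather than merely dominant, one may need to repeat up to $n/2$ of the generators, e.g.\ to realise the ``extra $U$'' phenomenon of Proposition~\ref{prop:ULU}; I would make this precise by an explicit recursive construction over the structure of $G$ (reductive part times unipotent radical), treating the torus, the unipotent radical via its central series, and the semisimple part via its root subgroups.

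The main obstacle I anticipate is establishing birationality, not merely dominance, with the tight length bound. Dominance is a generic rank condition on a single differential and is comparatively easy to arrange by genericity of the chosen lines; but birationality requires understanding the \emph{global} structure of generic fibres, i.e.\ that they are singletons and not finite sets of larger cardinality or positive-dimensional. For the explicit triangular or root-subgroup words this can be verified by inverting the parametrisation rationally, exactly as $a,b,c$ were recovered as rational functions of the matrix entries in the $\SL_2$ example; the hard part is carrying out such an inversion uniformly across all connected groups while respecting the $1.5\,n$ budget. I expect to handle this by reducing, via the structure theory, to the two building blocks for which explicit birational words and their lengths are already understood—namely unipotent groups (where the length equals $\dim$, by the exponential coordinates of the second kind) and reductive groups (where the $ULU$/big-cell analysis yields the factor up to $1.5$)—and then assembling them, checking that the lengths add correctly and stay below $1.5\,\dim G$ with equality only in the trivial case.
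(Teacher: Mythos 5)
Your plan has a genuine gap at its very first step: you cannot get the length bound $1.5\cdot\dim G$ by ``reducing to Theorem~\ref{thm:Main}''. Theorem~\ref{thm:Main} does give the irreducible control property from the existence of a birational word $u$, but its proof manufactures the required open word as $u^{\dim G+1}$ via Proposition~\ref{prop:Open}, so the guaranteed irreducible words are those containing $u^{\dim G+2}$ --- length on the order of $(\dim G)^2$, far beyond your budget. Controlling the length of the \emph{birational} word alone does not help; what you need, and what your proposal never produces, is a \emph{short open word}. Recall that birational does not imply open (the word $121$ in $\SL_2(\CC)$ is dominant and birational but not open), and Proposition~\ref{prop:Irred} needs both an open word $w$ and a birational word $u$ to conclude that $swut$ is irreducible. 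The paper's route is to bypass Proposition~\ref{prop:Open} entirely: writing $G=L\ltimes R$ (Levi decomposition) and using the Bruhat big cell, the map $U_-\times T\times U_+\times R\to G$ is an \emph{isomorphism onto an open subvariety}, hence simultaneously open and birational with no power-raising; setting $\tilde B:=T\cdot U_+\cdot R$, both $U_-\times\tilde B\to G$ and $\tilde B\times U_-\to G$ are open and birational, Proposition~\ref{prop:Irred} applies to $U_-\tilde B\tilde B U_-$, and Lemma~\ref{lm:Repetition} collapses this to $U_-\tilde B U_-$ of length $3l+m+k<1.5(2l+m+k)=1.5\dim G$. Your sketch gestures at the big cell for dominance but never isolates openness as the property to be secured cheaply, which is precisely the point of the argument.

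A secondary issue: your claim that genericity of the chosen lines ``forces the generic fibre to be a single reduced point by a dimension count'' is not correct. A dimension count with $l=\dim G$ factors only yields generic \emph{finiteness} of $\mu_u$, not degree one; generically finite maps of degree $>1$ are exactly what break irreducibility (compare Example~\ref{ex:Torus}, where the words are generically finite of degree $3$ and the fibres have $3$ components). Birationality must be exhibited explicitly, which the Bruhat/Levi decomposition does by giving an actual isomorphism onto the big cell. Your closing paragraph, which retreats to the structure theory (unipotent radical via exponential coordinates, reductive part via the $ULU$/big-cell pattern), is the right instinct and is essentially the paper's construction --- but it only works if you feed the resulting open-and-birational decompositions directly into Proposition~\ref{prop:Irred} rather than routing them through Theorem~\ref{thm:Main}.
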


\begin{re}
Theorem~\ref{thm:Any} can be interpreted as follows: if we want to use
one-parameter subgroups in designing a multiplicative system that can
reach all elements of the group $G$, then for dimension reasons we need
at least $n$ of these groups to reach all elements of $G$. At the cost
of choosing (less than) 1.5 times as many, we can ensure that irreducible
varieties lift to irreducible factorisation varieties. For $\GL_n(\CC)$,
the ULU-decomposition from Section~\ref{ssec:ULU} is of this form,
if we write $U$ and $L$ as suitable products of one-parameter groups.

We do not know if the factor $1.5$ is optimal---it is conceivable, for
instance, that for a different choice of one-parameter subgroups of $G$,
a word of length $n$ suffices.
\end{re}

\begin{re}
We will only use classical facts from the huge literature on matrix
decompositions, such as the LU decomposition and its generalisation, the
Bruhat decomposition. Nevertheless, we would like to
point out one recent paper that, although it concerns matrix decompositions of
a different nature than ours, uses techniques from algebraic groups
similar to our techniques: in \cite{Ye16}, it is proved that every matrix
is a product of Toeplitz matrices and also a product of
Hankel matrices, and a bound on the number of factors is
given. It would be interesting to see whether the factorisation spaces 
are also irreducible. A complicating factor there, 
is that the matrices are not required to be invertible, like they are
here.
\end{re}

\section{Proofs} \label{sec:Proofs}

In this section, we prove Theorem~\ref{thm:Main} and
Proposition~\ref{prop:ULU}. We retain
the notation from Section~\ref{sec:Intro}. 

\begin{lm} \label{lm:Subword}
If a word $u$ is dominant/surjective/open, then any word
$w$ containing $u$ as a consecutive sub-word has the same
property.
\end{lm}

\begin{proof}
Since $1 \in X_a$ for each $a \in A$, we have $\im \mu_w \supseteq \im
\mu_u$, so dominance or surjectivity of $u$ implies that of $w$ (here we
do not even need that the letters of $u$ appear at consecutive positions
in $w$). For openness, write $w$ as a concatenation $w_1 u w_2$ and let
$(x,y,z) \in X_{w_1} \times X_u \times X_{w_2} = X_w$. Let $U$ be an open
neighbourhood of $(x,y,z)$ in the Euclidean topology. The intersection
of $U$ with $\{x\} \times X_{w_1} \times \{z\}$ is of the form $\{x\}
\times V \times \{z\}$ with $V \subseteq X_{w_2}$ open in the Euclidean
topology. By openness of $u$, $\mu_u(V)$ contains an open neighbourhood
$O$ of $\mu_u(y)$ in the Euclidean topology. But then $\mu_{w_1}(x)
\cdot O \cdot \mu_{w_2}(z)$ is an open neighbourhood of $\mu_w(x,y,z)$
contained in $\mu_w(U)$.
\end{proof}

In our examples in Section~\ref{sec:Examples}, the $X_a$ will be connected
subgroups of $G$. By the following lemma, we may then restrict to
words without consecutive repeated letters.

\begin{lm} \label{lm:Repetition}
Suppose that each $X_a$ is a closed, connected subgroup of $G$. Let $w
\in A^*$ and let $u \in A^*$ be obtained from $w$ by replacing every run
of consecutive copies of any letter $b$ by a single $b$. Then $w \in A^*$
is dominant/surjective/open/irreducible if and only if $u$
has the corresponding property.
\end{lm}

\begin{proof}
It suffices to prove the result when $w=w_1bbw_2$ and $u=w_1bw_2$. Since
$X_b \cdot X_b = X_b$ we have $\im \mu_w=\im \mu_u$ and hence $w$ is
dominant/surjective iff $u$ is. Furthermore, consider the multiplication map 
\[ 
\phi: 
X_w = X_{w_1} \times X_b \times X_b \times X_{w_2} \to X_{w_1} \times
X_b \times X_{w_2}=X_u, \quad (x,s,s',z) \mapsto (x,ss',z).
\]
We have $\mu_w = \mu_u \circ \phi$ and $\phi$ is open, so if $u$ is open,
then so is $w$. On the other hand, for any Euclidean-open $O \subseteq X_u$ we have
$\mu_u(O)=\mu_w(\phi^{-1}(O))$, so if $w$ is open, then so is $u$. Now
let $Y \subseteq G$ be irreducible. Then $\mu_u^{-1}(Y)=\phi(\mu_w^{-1}(Y))$,
so if $w$ is irreducible, then so is $u$.  Conversely, $\mu_w^{-1}(Y)$
is the image of $X_b \times \mu_u^{-1}(Y)$ under the map $((s),(x,s',z))
\mapsto (x,s's,s^{-1},z)$, so since $X_b$ is irreducible, if $u$ is
irreducible, then so is $w$.
\end{proof}

\begin{lm} \label{lm:DomSur}
Dominant and surjective words exist.
\end{lm}

This is well known; we recall the argument from \cite[Proposition
I.2.2]{Borel91}. 

\begin{proof}
Let $w$ be a word such that $H:=\overline{\im \mu_w}$ has maximal
dimension. Then $X_a H \subseteq H$ for all $a \in A$ and
since $\bigcup_{a \in A} X_a$ is closed under inversion and 
generates $G$ we have $H=G$. Hence $w$ is dominant. By Chevalley's theorem,
$\im \mu_w$ contains an open, dense subset $U$ of $G$. Then for each $g
\in G$ the set $U \cap U^{-1} g$ is nonempty, so that there exist $h,h'
\in U$ with $hh'=g$. So $UU=G$ and therefore the concatanation $ww$
is surjective.
\end{proof}

As remarked before, any irreducible word is also surjective. But
unlike surjective words, irreducible words need not exist;
see the following example. 

\begin{ex} \label{ex:Torus}
Let $G=(\CC^*)^2$ and let $X_1=\{(t,t^2) \mid t \in \CC^*\}$ and
$X_2=\{(t^2,t) \mid t \in \CC^*\}$. Since $X_1$ and $X_2$ are subgroups,
by Lemma~\ref{lm:Repetition}
we may restrict our attention to words in which the letters $1,2$
alternate. For definiteness, consider $w=1212$. Then $\mu_w$
is the homomorphism of tori
\[ \mu_w: X_1 \times X_2 \times X_1 \times X_2 \cong
(\CC^*)^4 \to (\CC^*)^2,\quad  (t_1,t_2,t_3,t_4) \mapsto (t_1 t_2^2
t_3t_4^2,t_1^2t_2t_3^2t_4).
\]
Here the last map is the momomial map whose exponent
vectors are the rows of the $2 \times 4$-matrix
\[ 
\begin{bmatrix}
1 & 2 & 1 & 2 \\
2 & 1 & 2 & 1
\end{bmatrix}
\]
Let $b_1,b_2 \in \ZZ^4$ be the rows of this matrix. The fact that
$b_1,b_2$ do not span a saturated lattice---e.g.  $b_1/3+b_2/3 \in
\ZZ^4$---implies that $\ker \mu_w=\mu_w^{-1}(1,1)$ is not irreducible (it has $3$
irreducible components). Hence $w$ is not irreducible. The same applies
to longer words. 
\hfill $\clubsuit$
\end{ex}

An important difference between Example~\ref{ex:Torus} (where the
irreducible control property does not hold) and the example
of $\SL_2(\CC)$ in 
\S\ref{ex:SL2b} 
(where it {\em does} hold) is the existence of birational words in the
latter case and the non-existence of birational words in the former
case. This explains the condition in Theorem~\ref{thm:Main}.

We now set out to prove the existence of {\em open} words. A well-known
sufficient condition for a map to be open is that its derivative
is surjective at every point. This requirement will be too
restrictive for our purposes. For instance, in the example of $\SL_2(\CC)$
from \S\ref{ssec:SL2}, for any word $w=w_1 \ldots w_l$
that contains both letters $1,2$ at least once, the derivative at
$(x_{w_1}(0),\ldots,x_{w_l}(0))=(I,\ldots,I)$ of the multiplication map
has rank $2$ rather than $3$: its image is spanned by all matrices in
the Lie algebra $\liea{sl}_2$ with zeroes on the diagonal. 

We will show, however, that the set of such bad points is small enough
in a suitable sense. To this end, for any word $u \in A^*$, we define
\[ J_u:=\{x \in X_u \mid d_x \mu_u \text{ is not surjective} \}; \]
here $d_x \mu_u$ is the derivative $T_x X_u \to T_{\mu_u(x)} G$ at $x$
of the map $\mu_u$. If $u$ is not dominant, then $J_u$ is all of $X_u$;
otherwise, $\dim J_u \leq \dim X_u-1$.

\begin{lm} \label{lm:Jac}
For any two words $u,w \in A^*$ we have $J_{uw} \subseteq J_u \times J_w$.
In particular, writing $u^n$ for the concatenation of $n$ copies of a
{\em dominant} word $u$, we have $\dim J_{u^n} \leq n (\dim X_u - 1)$.
\end{lm}

\begin{proof}
For $(x,y) \in X_u \times X_w=X_{uw}$ we have 
\[ \im d_{(x,y)} \mu_{uw} \supseteq \mu_u(x) \cdot (\im d_y \mu_w) + (\im
	d_x \mu_u) \cdot \mu_w(y) \quad \text{(in $
	T_{\mu_u(x)\mu_w(y)} G$),} \]
so if the left-hand side has dimension less than $\dim G$, then also
$d_x \mu_u, d_y \mu_w$ have rank less than $\dim G$.  The last
statement follows from $\dim J_u \leq \dim X_u - 1$.
\end{proof}

\begin{prop} \label{prop:Open}
Open words exist. More specifically, if $u$ is a dominant
word, then $u^{\dim G + 1}$ is open.
\end{prop}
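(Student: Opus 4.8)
Set $N:=\dim G + 1$. Since $u$ is dominant, so is $u^N$ by Lemma~\ref{lm:Subword}, and $X_{u^N}=X_u\times\cdots\times X_u$ is irreducible as a product of irreducibles. The plan is to prove openness at an \emph{arbitrary} point $p=(p_1,\ldots,p_N)\in X_{u^N}$, with $g:=\mu_{u^N}(p)$, and any Euclidean neighbourhood $U\ni p$, by locating inside $U$ a ``good'' point $q$ lying in the \emph{same} fibre $\mu_{u^N}^{-1}(g)$ and transferring openness from $q$ to $p$. The elementary engine is a one-coordinate variation: if some coordinate $q_i$ of a point $q$ is a smooth point of $X_u$ at which $d_{q_i}\mu_u$ is surjective, then, freezing the other coordinates, the slice map $s\mapsto \mu_u(q_1)\cdots\mu_u(q_{i-1})\,\mu_u(s)\,\mu_u(q_{i+1})\cdots\mu_u(q_N)$ is a translate of $\mu_u$ on both sides, hence (being a submersion of smooth varieties composed with the homeomorphisms given by left and right translation) open at $q_i$. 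Consequently $\mu_{u^N}$ carries every neighbourhood of such a $q$ onto a set containing a neighbourhood of $\mu_{u^N}(q)$. Note this needs only \emph{one} well-behaved coordinate; the remaining coordinates, and $p$ itself, may be arbitrary (even singular) points.

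Call $p$'s fibre component $F_0$ the irreducible component of $\mu_{u^N}^{-1}(g)$ through $p$, and let the bad locus be $B:=\bigl(J_u\cup \mathrm{Sing}(X_u)\bigr)^{\times N}\subseteq X_{u^N}$, i.e.\ the set of points all of whose $X_u$-coordinates either lie in $J_u$ or are singular points of $X_u$. Because $u$ is dominant we have $\dim J_u\le \dim X_u-1$, and because $X_u$ is reduced and irreducible we have $\dim\mathrm{Sing}(X_u)\le\dim X_u-1$; hence, exactly as in the product estimate of Lemma~\ref{lm:Jac},
\[
\dim B\;\le\; N(\dim X_u-1)\;=\;N\dim X_u - N .
\]
On the other hand the fibre-dimension theorem gives $\dim F_0\ge \dim X_{u^N}-\dim G = N\dim X_u-\dim G$. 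Since $N=\dim G+1>\dim G$, these combine to
\[
\dim B\;\le\; N\dim X_u-N\;<\;N\dim X_u-\dim G\;\le\;\dim F_0 ,
\]
so $F_0\not\subseteq B$. This strict inequality, forced by the choice $N>\dim G$, is the whole point of the exponent.

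It remains to feed this back into openness at $p$. The set $F_0\setminus B$ is a nonempty Zariski-open subset of the irreducible complex variety $F_0$, and a nonempty Zariski-open subset of an irreducible complex variety is dense in the Euclidean topology; hence $p\in\overline{F_0\setminus B}$ and the given neighbourhood $U$ meets $F_0\setminus B$ in some point $q$. Then $\mu_{u^N}(q)=g$ (as $q$ lies in the fibre) and $q\notin B$, so $q$ has at least one coordinate that is a smooth point of $X_u$ at which $d\mu_u$ is surjective. The one-coordinate variation above, applied at $q$, shows that $\mu_{u^N}(U)$ contains a Euclidean neighbourhood of $\mu_{u^N}(q)=g$. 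As $U$ and $p$ were arbitrary, $\mu_{u^N}$ is open, proving that $u^{\dim G+1}$ is open and, in particular, that open words exist.

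The step I expect to be the crux is precisely the transfer from a generic, well-behaved fibre point $q$ to the possibly-singular special point $p$: openness at a point is \emph{not} implied by surjectivity of the Zariski tangent map there (a union of a plane and a transverse line already violates this), so one cannot argue at $p$ directly. Working with the single component $F_0$ through $p$, invoking Euclidean density of a Zariski-open subset of an irreducible variety, and demanding only one good coordinate (so that the frozen coordinates and $p$ are allowed to be singular) are the three devices that make the transfer legitimate; the dimension count of the previous paragraph is what guarantees $F_0$ escapes the bad locus in the first place.
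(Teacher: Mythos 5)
Your proof is correct and follows essentially the same route as the paper's: a dimension count (every fibre component has dimension $\geq N\dim X_u-\dim G$, while the bad locus has dimension $\leq N(\dim X_u-1)$, so $N>\dim G$ forces the component through an arbitrary point to meet the good locus) combined with Euclidean density of that good locus in the fibre component, transferring openness from a nearby good point in the same fibre. Your two refinements---enlarging the bad locus to $(J_u\cup\mathrm{Sing}(X_u))^{\times N}$ so the submersion criterion is only ever invoked at a smooth point, and requiring just one well-behaved block coordinate---are harmless and in fact tighten a point the paper leaves tacit, namely that surjectivity of the differential at a possibly singular point of $X_w$ would not by itself yield local openness.
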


\begin{proof}
Let $u$ be a dominant word and set $d:=\dim X_u$. For a positive integer
$n$ consider $w:=u^n$. Fix a point $x \in X_w$ and consider an irreducible
component $F \ni x$ of the fibre $\mu_w^{-1}(\mu_w(x))$. Now $\dim F \geq
nd-\dim G$ by properties of fibre dimension \cite[\S8]{Mumford88}, while
$\dim J_w \leq n(d-1)$ by Lemma~\ref{lm:Jac}. Hence if $n>\dim G$, then
$F$ is not contained in $J_w$. Hence any open ball $B$ around $x$ (in the
Euclidean topology) contains a $y \in F \setminus J_w$. Since $d_y \mu_w$
is surjective, $\mu_w$ maps an open ball in $B$ around $y$ onto an open
neighbourhood (in the Euclidean topology) of $\mu_w(y)=\mu_w(x)$. Hence
$\mu_w$ is open in the Euclidean topology.
\end{proof}

\begin{prop} \label{prop:Irred}
If $w$ is an open word and $u$ is a birational word, then for
any $s,t \in A^*$ the concatenation $swut$ is irreducible. 
\end{prop}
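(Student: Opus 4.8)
The plan is to fix a nonempty irreducible $Y \subseteq G$ and show that its preimage $N := \mu_{swut}^{-1}(Y) \subseteq X_s \times X_w \times X_u \times X_t$ is the Zariski closure of a single irreducible, locally closed piece, and hence irreducible. Since $u$ is birational and $G$ is smooth, there is a dense open $V \subseteq G$ over which $\mu_u$ restricts to an isomorphism onto $V$; write $\sigma \colon V \to X_u$ for the inverse morphism. I would single out the ``good part''
\[ N_0 := \{(a,b,c,d) \in N \mid \mu_u(c) \in V\}, \]
an open subset of $N$ on which the middle factor $c$ carries no extra information: from $(a,b,d)$ and the target $g := \mu_{swut}(a,b,c,d) \in Y$ birationality forces $c = \sigma\bigl(\mu_w(b)^{-1}\mu_s(a)^{-1} g\, \mu_t(d)^{-1}\bigr)$.

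The first step is to prove that $N_0$ is irreducible. Consider the irreducible variety $Z := X_s \times X_w \times X_t \times Y$ (a product of irreducibles) and the morphism $h \colon Z \to G$, $(a,b,d,g) \mapsto \mu_w(b)^{-1}\mu_s(a)^{-1} g\, \mu_t(d)^{-1}$. The assignment $(a,b,c,d) \mapsto (a,b,d,\mu_{swut}(a,b,c,d))$ identifies $N_0$ isomorphically with $h^{-1}(V)$, the inverse being $(a,b,d,g) \mapsto (a,b,\sigma(h(a,b,d,g)),d)$, which is a morphism precisely because $\sigma$ is. It then remains to see that $h^{-1}(V)$ is nonempty and dense in $Z$: fixing $a$ and $d$ at their identity points and a point $g_0 \in Y$, the image of $b \mapsto h(1,b,1,g_0) = \mu_w(b)^{-1} g_0$ is dense, since openness of $w$ implies that $\mu_w$ is dominant. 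Hence $h$ is dominant and $h^{-1}(V)$ is a dense open subset of the irreducible $Z$, so $N_0 \cong h^{-1}(V)$ is irreducible.

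The crux, and the step I expect to be the main obstacle, is to show that $N_0$ is dense in $N$; here I would genuinely use the \emph{Euclidean} openness of $w$. Fix an arbitrary $(a_0,b_0,c_0,d_0) \in N$ and set $m_0 := \mu_w(b_0)\mu_u(c_0)$, so that the total product equals $g_0 := \mu_s(a_0)\, m_0\, \mu_t(d_0) \in Y$. I would keep $a_0, d_0$ fixed and perturb only the pair $(b,c)$, looking for nearby $(b,c)$ with $\mu_w(b)\mu_u(c) = m_0$ and $\mu_u(c) \in V$; any such point $(a_0,b,c,d_0)$ then lies in $N_0$ and is close to the original. To produce it: because $\mu_u^{-1}(V)$ is open and nonempty, hence dense in the irreducible $X_u$, I can choose $c$ near $c_0$ with $\mu_u(c) \in V$; by continuity $m_0\mu_u(c)^{-1}$ is then close to $\mu_w(b_0)$, and by Euclidean openness of $\mu_w$ the image of a small ball around $b_0$ contains a neighbourhood of $\mu_w(b_0)$, and so contains $m_0\mu_u(c)^{-1} = \mu_w(b)$ for some $b$ near $b_0$. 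The delicate point is to meet the two conditions on $c$ simultaneously while staying near $c_0$, which works because a nonempty Euclidean-open set around $c_0$ meets the dense set $\mu_u^{-1}(V)$. This places $(a_0,b_0,c_0,d_0)$ in the Euclidean closure of $N_0$; since $N_0$ is constructible its Euclidean and Zariski closures coincide, giving $N \subseteq \overline{N_0}$, and as $N$ is closed with $N_0 \subseteq N$ we conclude $N = \overline{N_0}$ is irreducible.
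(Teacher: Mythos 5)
Your proposal is correct and follows essentially the same route as the paper: parametrise the locus where the $u$-coordinate is determined by the birational inverse via the irreducible variety $X_s \times X_w \times X_t \times Y$ (nonempty by dominance of $w$), then use Euclidean openness of $\mu_w$ to perturb only the $w$- and $u$-coordinates and show this locus is dense in the whole preimage. The only cosmetic differences are that you shrink $V$ so that $\mu_u^{-1}(V)\to V$ is an isomorphism and present the good locus as isomorphic to $h^{-1}(V)$ rather than as the image of a morphism from it, which changes nothing essential.
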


\begin{proof}
Let $Y \subseteq G$ be an irreducible, closed subset, and set
$F:=\mu_{swut}^{-1}(Y)$. We will show that $F$ is irreducible.

There exists an open, dense subset $O$ of $X_u$ such that $\mu_u$
restricts to an isomorphism from $O$ to an open, dense subset $P$ of
$G$. Let $\phi:P \to O$ be the inverse of that isomorphism. 

If $\mu_{swut}$ maps $(x_s,x_w,x_u,x_t) \in X_{swut}$ to $y
\in Y$, then 
\[ \mu_u(x_u)=\mu_w(x_w)^{-1} \mu_s(x_s)^{-1} y \mu_t(x_t)^{-1} \]
and therefore, if $x_u \in O$, then we have 
\[ x_u=\phi(\mu_w(x_w)^{-1} \mu_s(x_s)^{-1} y \mu_t(x_t)^{-1}). \]
Let $Q$ be the open subset of $X_s \times X_w \times X_t
\times Y$ defined by 
\[ Q:=\{(x_s,x_w,x_t,y) \mid 
	\mu_w(x_w)^{-1} \mu_s(x_s)^{-1} y \mu_t(x_t)^{-1} \in P\}. \]
Since $w$ is open and {\em a fortiori} dominant, $Q$ is nonempty, hence dense in $X_s
\times X_w \times X_t \times Y$, hence irreducible. Define
the morphism $\psi: Q \to F$ by 
\[ \psi(x_s,x_w,x_t,y) = (x_s,x_w,
\phi(\mu_w(x_w)^{-1} \mu_s(x_s)^{-1} y
\mu_t(x_t)^{-1}),x_t).  \]
The image of $\psi$ is an irreducible subset of $F$ that contains all points
$(x_s,x_w,x_u,x_t) \in F$ for which $x_u$ lies in $O$. We
claim that $F=\overline{\im \psi}$, so that $F$ is, indeed,
irreducible. 

For this it suffices to prove that for any $(z_s,z_w,z_u,z_t) \in F$ and any
open neighbourhood $\Omega$ of $(z_s,z_w,z_u,z_t)$ in $X_{swut}$ there exists a
point $(x_s,x_w,x_u,x_t) \in F \cap \Omega$ with $x_u \in O$. We can in
fact take $x_s:=z_s$ and $x_t:=z_t$ and only vary $z_w$ and $z_u$. Indeed,
the neighbourhood $\Omega$ contains $\{z_s\} \times B_w \times B_u \times
\{z_t\}$ for small balls $B_w$ and $B_u$ around $z_w \in X_w$ and $z_u
\in X_u$, respectively. As $w$ is open, $\mu_w(B_w)$ contains an open
ball $B'_w$ around $\mu_w(z_w) \in G$. If we take any $x_u$ in $B_u$
sufficiently close to $z_u$, then $\mu_w(z_w) \mu_u(z_u) \mu_u(x_u)^{-1}
\in B'_w$ and hence there exists an $x_w \in B_w$ such that $\mu_w(x_w)
\mu_u(x_u)=\mu_w(z_w) \mu_u(z_u)$. Since $O$ is dense in
$X_u$, we may take such an $x_u \in O \cap B_u$ and have
thus found a point $(z_s,x_w,x_u,z_t) \in F \cap \Omega$
with $x_u \in O$.
\end{proof}

\begin{proof}[Proof of Theorem~\ref{thm:Main}.]
By assumption, a birational word $u \in A^*$ exists. By
Proposition~\ref{prop:Open}, an open word $w \in A^*$ exists;
indeed, some concatenation of $u^n$ of copies of $u$ is open.
By Proposition~\ref{prop:Irred}, any word in $A^*$ containing $u^{n+1}$
as a consecutive sub-word is irreducible. Hence $(X_a)_{a \in A}$ has
the irreducible control property.
\end{proof}

\begin{proof}[Proof of Proposition~\ref{prop:ULU}.]
Set $X_1:=L$ and $X_2:=U$. By the classical LU-decomposition, the
word $u:=12$ is open and birational, and so ist the word
$w:=21$ (the transpose of the LU-decomposition is the
UL-decomposition). 
By Proposition~\ref{prop:Irred}, any word containing $wu=2112$ is
irreducible. Finally, by Lemma~\ref{lm:Repetition}, we may replace the
two consecutive $1$s by a single $1$, i.e., every word containing $121$
is irreducible. This proves Proposition~\ref{prop:ULU}.
\end{proof}

\section{Examples} \label{sec:Examples}

\subsection{The case of $\SL_2(\CC)$}  \label{ex:SL2b}

Recall the subgroups $X_1,X_2$ of $\SL_2(\CC)$ from
\S\ref{ssec:SL2}. By Lemma~\ref{lm:Repetition},
we need only look at words where the letters $1,2$ alternate. In
\S\ref{ssec:SL2} we already saw that $121$ is dominant
and $1212$ is surjective but not irreducible. In Example~\ref{ex:SL2a}
we saw, moreover, that $121$ is not open.

We claim that $1212$ {\em is} open. By the analysis in
Example~\ref{ex:SL2a}, it is certainly locally open around points
$(x_1(a),x_2(b),x_1(c),x_2(d))$ with $b \neq 0$ or, similarly, $c \neq
0$. Moreover, by acting with $x_1(-a)$ from the left and $x_2(-d)$ from
the right, we see that it suffices to check local opennes at the point
where $a=b=c=d=0$. Suppose, then, that we want to solve
\begin{align*}
&x_1(a) x_2(b) x_1(c) x_2(d)=
\begin{bmatrix}
1 + a b + a d + c d + a b c d & a(1+bc) + c \\
d(1+bc) +b & 1 + b c
\end{bmatrix}
=\\
&=
\begin{bmatrix}
x & y \\ z & w
\end{bmatrix} \in \SL_2(\CC),
\end{align*}
where $x,w \approx 1$ and $y,z \approx 0$. Then we can find small
$b,c$ such that $1+bc=w$ (e.g., both equal to the same
square root of $w-1$), and after that $a$ and $d$ determined
by $a=(y-c)/(1+bc)$ and $d=(z-b)/(1+bc)$ are also small. The last
condition $1+ab+ad+cd+abcd=x$ now follows, since the right-hand
matrix lies in $\SL_2(\CC)$. This shows that $\mu_{1212}$ is open near
$(x_1(0),x_2(0),x_1(0),x_2(0))$, so $1212$ is an open word.

Next, $12121$ inherits the surjectivity and openness
from $1212$ by Lemma~\ref{lm:Subword}, but is still not
irreducible. For example, the fibre $\mu_{12121}^{-1}(I)$
consists of all quintuples 
$(\phi_1(a),\phi_2(b),\phi_1(c),\phi_2(d),\phi_1(e))$ with either
$c=0=b+d=a+e$ or $d=0=b=a+c+e$.

Finally, we claim that $121212$ and all larger alternating words are
irreducible. Indeed, the word $w=1212$ is open, the word $u=212$
is birational (by an argument similar to that for $121$), and hence
$swut=s1212212t$ is irreducible by Proposition~\ref{prop:Irred} for all
words $s,t$. Now apply Lemma~\ref{lm:Repetition} to replace $22$ by $2$.

\subsection{On a question by Kutzschebauch}

This paragraph concerns an example communicated to me by Frank
Kutzschebauch; see \cite{Ivarsson20}.
Let $G:=\Sp_{2n}(\CC)$, the complex symplectic group preserving the symplectic
form $\langle (b,c),(d,e) \rangle :=  b e^T - c d^T$, where $b,c,d,e \in
\CC^n$ and $(b,c),(d,e) \in \CC^{2n}$ are thought of as row vectors. 

Let
\[ X_1:=\left\{ \begin{bmatrix} 1 & 0 \\ A & 1 \end{bmatrix}
\mid A^T=A\right\} \text{ and }
X_2:=\left\{ \begin{bmatrix} 1 & B \\ 0 & 1 \end{bmatrix}
\mid B^T=B\right\},\]
two subgroups of $G$ isomorphic, as algebraic groups, to the
vector space $\CC^{\binom{n+1}{2}}$.

\begin{thm} \label{thm:Sp2n}
Let $w$ be a word in the alphabet $\{1,2\}$ in which the letters $1$ and
$2$ alternate, and assume that $w$ is sufficiently long.  Let $(b,c),(d,e)
\in \CC^{2n}$. Then the closed set
\[ \{x \in X_w \mid (b,c) \mu_w(x)=(d,e) \} \]
is irreducible. 
\end{thm}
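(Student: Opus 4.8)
The plan is to introduce the orbit map $\beta:\Sp_{2n}(\CC)\to\CC^{2n}$, $g\mapsto (b,c)g$, so that the set in the statement is exactly $\nu_w^{-1}(d,e)$ for $\nu_w:=\beta\circ\mu_w$, and then to imitate the proof of Proposition~\ref{prop:Irred} inside the homogeneous space $\CC^{2n}\setminus\{0\}\cong\Sp_{2n}(\CC)/\mathrm{Stab}(b,c)$. First I would dispatch the reductions: by Lemma~\ref{lm:Repetition} it suffices to treat alternating $w$; if $(b,c)=0$ the set is $\Sp_{2n}(\CC)$ or empty, and if $(b,c)\neq 0$ then, since $\Sp_{2n}(\CC)$ is transitive on nonzero vectors, $(d,e)$ must also be nonzero, and $Y:=\beta^{-1}(d,e)$ is a coset $\mathrm{Stab}(b,c)\,g_0$ of the stabiliser. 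The stabiliser is connected (an extension of $\Sp_{2n-2}(\CC)$ by a unipotent group), so $Y$ is irreducible. I would stress here that Theorem~\ref{thm:Main} cannot be applied directly: because $\dim X_i=\binom{n+1}{2}$ does not divide $\dim\Sp_{2n}(\CC)=n(2n+1)$ for $n\ge2$, no word is birational, and the argument must use the special position of $Y$.

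Second I would prove openness. A short computation shows that $X_1,X_2$ generate $\Sp_{2n}(\CC)$: bracketing the two abelian blocks produces all matrices $\begin{bmatrix}X&0\\0&-X^{T}\end{bmatrix}$ with $X\in\liea{gl}_n$, since products $BA$ of symmetric matrices span the full matrix algebra; adding back the two blocks one recovers all of $\liea{sp}_{2n}$. Hence dominant words exist, and by Proposition~\ref{prop:Open} together with Lemma~\ref{lm:Repetition} some alternating word is open; by Lemma~\ref{lm:Subword} every sufficiently long alternating $w$ has $\mu_w$ open. As $\beta$ restricts to an open map onto the orbit $\CC^{2n}\setminus\{0\}$, the composite $\nu_w$ is then open in the Euclidean topology.

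Third is the solving step, in which an affine bundle plays the role of the birational word in Proposition~\ref{prop:Irred}. Writing $w=w''12$ and letting $(p,q):=(b,c)\mu_{w''}(x'')$ be the incoming vector, the equation $\nu_w(x)=(d,e)$ unwinds into the two symmetric linear systems $qA=d-p$ and $dB=e-q$ for the last two factors. Over the locus where the incoming $q\neq 0$ and the target coordinate $d\neq 0$, each system is solvable and its solution set is an affine translate of the constant-rank space of symmetric matrices annihilated by $q$, respectively by $d$. This presents the part of $\nu_w^{-1}(d,e)$ lying over $\{x''\mid q\neq 0\}$ as an affine bundle with irreducible fibres over an irreducible (open, dense) base, hence irreducible; and, exactly as in Proposition~\ref{prop:Irred}, openness of $\mu_{w''}$ lets me fill in the Euclidean closure, so this part is dense in $\nu_w^{-1}(d,e)$.

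The main obstacle, which I expect to be the heart of the proof, is the degenerate locus where these systems drop rank. The incoming degeneracy $q=0$ should be removed by a fibre-dimension estimate in the spirit of Lemma~\ref{lm:Jac}, showing that it cuts out a set of dimension strictly below that of the generic fibre, so that openness of $\nu_w$ forces it into the closure of the affine bundle above. The target degeneracy $d=0$, which occurs precisely when $(d,e)=(0,e)$ with $e\neq0$, is of a different nature, since there the $B$-equation degenerates into the constraint $q=e$ rather than a free solve; here I would argue by induction on the rank $n$, observing that for $d=0$ the motion is confined to the symplectic subspace stabilised by $(0,e)$, on which $X_1,X_2$ restrict to the analogous symmetric-matrix subgroups of $\Sp_{2n-2}(\CC)$, with base case $n=1$ furnished by Theorem~\ref{thm:Main} since $\SL_2(\CC)$ does admit a birational word. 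The delicate point throughout is the bookkeeping of these degenerate strata: certifying that every solution, degenerate or not, lies in the closure of the single generic component.
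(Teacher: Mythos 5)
Your reductions are sound: identifying the set as $\mu_w^{-1}(Y)$ for $Y$ an irreducible coset of the (connected) stabiliser of $(b,c)$, and observing that for $n\ge 2$ no word over $\{1,2\}$ can be birational because $\binom{n+1}{2}$ does not divide $\dim \Sp_{2n}(\CC)=n(2n+1)$. But your conclusion that one must therefore abandon Proposition~\ref{prop:Irred} and argue directly in the homogeneous space is where you part ways with the paper, and the route you take instead has a genuine gap. The paper's resolution of the divisibility obstruction is to \emph{refine the alphabet}: it fixes a sufficiently general codimension-$n$ subspace $S$ of the symmetric matrices, with complement $T$, and sets $X_3,X_4\subseteq X_2$ to be the corresponding subgroups, so that $X_3\times X_4\to X_2$ is an isomorphism and $\dim X_{1213}=\dim G$ exactly; Lemma~\ref{lm:1213} asserts that $1213$ is birational onto $G$. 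Proposition~\ref{prop:Irred} then makes every word containing $w1213$ (hence $w12134$) irreducible, and the isomorphism $X_3\times X_4\cong X_2$ transports irreducibility back to words over $\{1,2\}$ containing $w1212$. The theorem follows by taking $Y$ to be the coset, with no stratification of fibres of the orbit map at all.

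The gap in your argument sits exactly where you say you expect the heart of the proof to be, and it is not clear it can be closed as proposed. To place the stratum of $\nu_w^{-1}(d,e)$ lying over $\{q=0\}$ inside the closure of the affine-bundle part, you invoke a dimension estimate. But over $\{q=0\}$ the equation $qA=d-p$ degenerates to $p=d$ with $A$ completely free, so $A$ contributes the full $\binom{n+1}{2}$ instead of $\binom{n+1}{2}-n$; and the base of that stratum is $\nu_{w''}^{-1}(d,0)$, a possibly \emph{degenerate} fibre of the shorter word, for which the only a priori bound is $\dim X_{w''}-1$. With that bound the stratum can have dimension up to $\dim X_w-1-n$, which exceeds the generic fibre dimension $\dim X_w-2n$ for every $n\ge 1$, so the estimate does not close without controlling special fibres of $\nu_{w''}$ --- which is the statement being proved, i.e.\ the argument is circular as stated. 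Openness of $\nu_w$ does not rescue it: the compensation step in Proposition~\ref{prop:Irred} relies crucially on the last block being \emph{birational} (zero-dimensional generic fibres), so that perturbing the earlier factors determines the last one; your affine bundle has positive-dimensional fibres and the analogous perturbation need not stay bounded as $q\to 0$. Finally, in the case $d=0$ the intermediate vectors $(p,q)$ are not confined to the symplectic subspace stabilised by $(0,e)$ --- only the endpoint is constrained --- so the proposed induction on $n$ does not set up as described.
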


The following lemmas are proved by straightforward calculations.

\begin{lm}
The map $\mu_{121}$ maps $X_{121}$ birationally to the closed subset 
\[ Z:=\left\{ \begin{bmatrix} C & D \\ E & F \end{bmatrix}
\mid D^T=D \right\} \subseteq G. \]
\ \hfill $\square$
\end{lm}

Let $S$ be a sufficiently general codimension-$n$ subspace of the space
of symmetric $n \times n$-matrices, let $T$ be a vector
space complement of $S$ (of dimension $n$) and define 
\[ X_3:=\left\{ \begin{bmatrix} 1 & B \\ 0 & 1 \end{bmatrix}
\mid B \in S\right\}, \quad 
X_4:=\left\{ \begin{bmatrix} 1 & B \\ 0 & 1 \end{bmatrix}
\mid B \in T\right\} \subseteq X_2, \]
so that the multiplication map $X_3 \times X_4 \to X_2$ is an isomorphism.

\begin{lm}  \label{lm:1213}
The map $\mu_{1213}$ maps $X_{1213}$ birationally onto $G$.
\end{lm}

In particular, by Theorem~\ref{thm:Main}, the collection $X_1,X_2,X_3$
has the irreducible control property. We now use the
ingredients for that theorem to prove
Theorem~\ref{thm:Sp2n}.

\begin{proof}[Proof of Theorem~\ref{thm:Sp2n}.]
By the previous lemma and $X_2 \supseteq X_3$, the word $v:=1212$ is
dominant. Hence by Proposition~\ref{prop:Open}, $w:=v^{\dim G + 1}$
is open. Hence by Lemma~\ref{lm:1213} and Proposition~\ref{prop:Irred},
any word containing $w1213$ is irreducible. In particular, this holds
for any word containing $w12134$.  Since $X_3 \times X_4 \to X_2$ is an
isomorphism, it holds for any word $u$ containing $w1212=v^{\dim
G + 2}$.

Finally, for $Y$ choose the set $\{g \in G \mid (b,c)g=(d,e)\}$,
which is a coset of the irreducible stabiliser of $(b,c)$ in
$G$, hence irreducible. Therefore $\mu_u^{-1}(Y)$ is
irreducible, as desired.
\end{proof}

\begin{re}
Using direct computations it is likely possible to find much shorter
open words, so that ``sufficiently long'' in Theorem~\ref{thm:Sp2n}
becomes a much milder condition. Indeed, for $n=2$ and $(b,c)=(0,0,0,1)$
and alternating words starting with $1$, \cite[Lemma
10.5]{Ivarsson20}
states that Theorem~\ref{thm:Sp2n} holds for words of length at least
$5$ for all values of $(d,e)$, while for the word $1212$ certain vectors
$(d,e)$ have reducible fibres.
\end{re}

\subsection{The case of general algebraic groups}

Let $G$ be a connected algebraic group. We will prove
Theorem~\ref{thm:Any}.

\begin{proof}[Proof of Theorem~\ref{thm:Any}.]
Let $R$ be the unipotent radical of $G$, and let $L$ be a Levi complement
of $R$, i.e., a subgroup of $G$ such that the map $L \times R \to G$ is
an isomorphism of varieties, so that $G$ is isomorphic to a semi-direct
product $L \ltimes R$; such a group exists by \cite{Mostow56}. Then $L$
is a reductive group, and admits a decomposition akin to the
LU decomposition in $\GL_n(\CC)$; the details are as
follows.

Let $B_+$ be a Borel subgroup of $L$, $T$ a maximal torus in $B_+$,
$U_+$ the unipotent radical of $B_+$, and $U_-$ the unipotent radical
of the Borel group $B_-$ opposite to $B_+$, i.e. such that $B_- \cap B_+
= T$. By the Bruhat decomposition, the multiplication map $U_- \times T
\times U_+ \to L$ is an isomorphism of varieties with an open subvariety
of $L$ by \cite[Theorem IV.14.12]{Borel91}. (For
$L=\GL_n(\CC)$, this is
just the LU-decomposition seen in \S\ref{ssec:ULU}.) In particular,
this map is open and birational. 

Consequently, also the multiplication map 
\[ U_- \times T \times U_+ \times R \to G \]
is open and birational. Similarly, so is the map 
\[ R \times U_+ \times T \times U_- \to G. \]
Now $\tilde{B}_+:=T \cdot U_+ \cdot R = R \cdot U_+ \cdot T$ is a subgroup of $G$---in
fact, a Borel subgroup of $G$---and the multiplication maps $T \times U_+
\times R \to \tilde{B}_+$ and $R \times U_+ \times T \to \tilde{B}_+$
are isomorphisms of varieties. It follows that the multiplication maps
\[ U_- \times \tilde{B} \to G \text{ and } \tilde{B} \times
U_- \to G \]
are both birational and open. Hence, using
Proposition~\ref{prop:Irred} as in the proof of
Proposition~\ref{prop:ULU}, all pre-images of irreducible
varieties in $G$ under the multiplication map 
\[ U_- \times \tilde{B} \times \tilde{B} \times U_- \to G \]
are irreducible. Now use Lemma~\ref{lm:Repetition} to
conclude that the multiplication map 
\begin{equation} \label{eq:General} U_- \times \tilde{B} \times U_- \to G 
\end{equation}
has the same property. 

To find the one-parameter subgroups, we proceed as follows. 
Set $l:=\dim U_-=\dim U_+$ and $m:=\dim T$ and $k:=\dim R$.

We have $T \cong (\CC^*)^m$, and this yields $m$ isomorphic copies
$X_{l+1},\ldots,X_{l+m} \subseteq T$ of $\CC^*$ such that the
multiplication map $X_{l+1} \times \cdots \times X_{l+m} \to T$ is an
isomorphism of varieties (and even of algebraic groups).

Furthermore, for any connected unipotent algebraic group $H$, there
exists a basis $v_1,\ldots,v_p$ of the Lie algebra $\liea{h}$ of $H$
such that the one-parameter subgroups $H_i:=\exp(\CC v_p)$
(which are algebraic subgroups!) have the
property that the product map $H_1 \times \cdots \times H_p \to H$
is an isomorphism of varieties.

Applying the previous paragraph to the groups $U_-,U_+,R$ of dimensions
$l,l,k$, and combining these with the one-parameter subgroups
$X_{l+1},\ldots,X_{l+m}$ of $T$, we find one-parameter subgroups such
that the composition of the multiplication maps:
\begin{align*} 
&(X_1 \times \cdots \times X_l) \times (X_{l+1} \times
\cdots \times X_{l+m}) \times (X_{l+m+1} \times \cdots \times
X_{2l+m}) \times\\ &\times (X_{2l+m+1} \times \cdots \times
X_{2l+m+k}) \times
(X_1 \times \cdots \times X_l) \to 
U_- \times T \times U_+ \times R \times U_- \to G
\end{align*}
has the property that all preimages of irreducible varieties
in $G$ are irreducible, and that, indeed, all words in the alphabet
$\{1,\ldots,2l+m+k\}$ containing the word
$(1,2,\ldots,2l+m+k,1,2,\ldots,l)$ are irreducible. Finally,
we observe that the word above has length 
\[ 3l+m+k < (3/2) (2l + m + k) = 1.5 \cdot \dim G. \]
\end{proof}

\begin{re}
The proof yields a slightly better bound than $1.5 \cdot \dim G$,
namely, $\dim G + (\dim L)/2$, where $L$ is the levi complement of $G$,
or even $\dim G + (\dim L - l)/2$, where $l$ is the {\em rank} of $G$
(and of $L$), namely, the dimension of a maximal torus.  However, if $G$
is restricted to the classical groups (roughly speaking, the
general linear groups, the orthogonal groups, and the
symplectic groups), so that $L=G$, then $l$ is proportional 
to the square root of $\dim G$, so that this bound is not much better
than $1.5 \cdot \dim G$. It would be interesting to show the existence
of irreducible words of shorter length, say around $\dim G$.
\end{re}

\begin{ex}
For $\SL_2(\CC)$, if in addition to $X_1,X_2$ from
\S\ref{ssec:SL2} we take the multiplicative
one-parameter subgroup 
\[ X_3:=\left\{\begin{bmatrix} t & 0 \\ 0 & t^{-1}
\end{bmatrix} \mid t \in \CC^* \right\}, \]
then the proof above says that the word $2312$, and all words containing
it, are irreducible.
\end{ex}

\bibliographystyle{alpha}
\bibliography{diffeq}

\end{document}